\DeclarePairedDelimiter{\ceil}{\lceil}{\rceil}
\newtheorem{theorem}{Theorem}
\newtheorem{proof}{Proof}
\newtheorem{lemma}{Lemma}
\newcommand{\headrow}{\rowcolor{black!20}}
\newcommand{\N}{\mathbb{N}}
\newcommand{\R}{\mathbb{R}}
\newcommand{\C}{\mathbb{C}}
\newcommand{\dep}{\mbox{dep}}
\newcommand{\TT}{^\ast} 
\title{Krylov Methods for Adjoint-Free Singular Vector Based Perturbations in Dynamical Systems }
\author{
    Jens Winkler\\
    \small Deutscher Wetterdienst and\\
    \small Numerics and Optimization\\ 
    \small Philipps Universit\"at Marburg\\
    \small winklerj@mathematik.uni-marburg.de
  \and
  
    Michael Denhard\\
    \small Deutscher Wetterdienst\\[3mm]
    
    Bernhard A. Schmitt\\    
    \small Numerics and Optimization\\
    \small Philipps Universit\"at Marburg\\
}
\date{September 2019}
\begin{document}
\maketitle

\begin{abstract}
The estimation of weather forecast uncertainty with ensemble systems requires a careful selection of perturbations to establish a reliable sampling of the error growth potential in the phase space of the model. Usually, the singular vectors of the tangent linear model propagator are used to identify the fastest growing modes (classical singular vector perturbation (SV) method). In this paper we present an efficient matrix-free block Krylov method for generating fast growing perturbations in high dimensional dynamical systems. A specific matrix containing the non-linear evolution of perturbations is introduced, which we call Evolved Increment Matrix (EIM). Instead of solving an equivalent eigenvalue problem, we use the Arnoldi method for a direct approximation of the leading singular vectors of this matrix, which however is never computed explicitly. This avoids linear and adjoint models but requires forecasts with the full non-linear system. The performance of the approximated perturbations is compared with singular vectors of a full EIM (not with the classical SV method). We show promising results for the Lorenz96 differential equations and a shallow water model, where we obtain good approximations of the fastest growing perturbations by using only a small number of Arnoldi iterations.\\
\textbf{ keywords: numerical weather prediction, ensemble systems, forecast uncertainty, singular vectors, Arnoldi method, conditional non-linear optimal perturbations}
\end{abstract}

\section{Introduction}

Weather forecasting is a computationally challenging initial value problem. It involves the estimation of the current atmospheric state and running a complex numerical weather prediction (NWP) model. Forecast errors have two major sources: model error and errors in initial conditions. While the former leads to deviations from the true atmospheric behaviour, the latter may amplify quickly at any time during the forecast due to non-linear dynamical processes in the model. These instabilities may not exactly correspond to the sensitivities in the real atmosphere, but it would be quite revealing to understand which fraction of forecast uncertainty can be attributed to the errors in initial conditions.

For the estimation of forecast uncertainty \cite{bib:LEITH} introduced a Monte Carlo ansatz computing not just one but several randomly perturbed forecasts in order to estimate a probability density function. Given the fact that usually only a small number of forecast integrations with a complex model is possible, the strategy of sampling the initial uncertainty in high dimensional phase spaces by random perturbations is quite inefficient. 
A similar argument holds for perturbations from existing ensemble data assimilation (EDA) schemes. For example the 4DVar based EDA at ECMWF (\cite{bib:Isaksen}) selects valid model trajectories consistent with sets of perturbed observations, but does not specifically look for fast separating trajectories. Ensemble Kalman filters (EnKF) use various types of co-variance inflation techniques, where most are either non-flow dependent or of random nature. \cite{bib:Hamill} discuss the constraints on spread growth in forecasts initialized from EnKF. 
Of course, analysis error co-variances are the framework of any useful perturbation strategy. They provide an estimate of the initial uncertainty given the amount and quality of local observations. But the same analysis error co-variances could be generated from a large number of different sets of perturbations. EDA schemes do not systematically analyse the spread growth in the analysis cycle, simply because this is not required for successfully running assimilation schemes. However, when generating a forecast, reliable growth of ensemble spread demands a detailed analysis of the model instabilities. %
%

%
%
%

A popular technique to obtain at least some of this information is the empirical breeding method (\cite{bib:Kalnay93, bib:Kalnay97}). Bred vectors result from an iterative process, which alternates the evolution of a sphere of finite amplitude perturbations with the full non linear equations along a reference trajectory and their regular rescaling. During cycling the sphere of perturbations is distorted along the direction of the fastest growth and must be re-inflated to avoid its degeneration (e.g. \cite{bib:Annan04}). The re-orthogonalisation can be done using the Gram-Schmidt process (e.g. \cite{bib:Benettin80}) or singular value decomposition (SVD) (e.g. \cite{bib:Keller10,bib:Frolov16}). Alternatively logarithmic breeding (\cite{bib:Primo08}) re-inflates the bred vectors by amplifying the small scale variability in space and time. \cite{bib:Wolf85} showed that the long term evolution of the axes of such a directed sphere can be used to estimate the spectrum of the Lyapunov exponents. Since the re-orthogonalisation does not change the first/largest direction of the sphere (bred vector), the corresponding Lyapunov exponent measures the \textit{long term} growth of two nearby trajectories, which stay within a certain subspace of the full phase space, called the system attractor (\cite{bib:Kalney03}). Therefore, the bred vectors usually do not point into the direction of maximal local short term growth, which might be caused by a perturbation outside that subspace.

For a more systematic exploration of the phase space the classical singular vector perturbation (SV) technique has been introduced in NWP by \cite{bib:Mureau} and \cite{bib:Molteni}. The linear propagator of the model, the tangent linear model (TLM), is iterated for a specific optimization time and the fastest growing perturbations are given by the leading (right) singular vectors of the propagator matrix. 
At the European Centre for Medium-Range Weather Forecasts (ECMWF) the singular vector computations are based on a 48 hour time window (see \cite{bib:Leutb_Palmer08}, \cite{bib:IFS_EPS_BARK} and \cite{bib:Magnusson2}). 
They estimate the leading singular vectors by solving an equivalent eigenvalue problem using the Lanczos algorithm (\cite{bib:Lanczos}), which in addition requires the approximation of the adjoint propagator.
\cite{bib:LeutbecherLang}, stated that ''the raison d'\'{e}tre for singular vector perturbations would vanish, if the EDA, together with the representation of model uncertainties, generated enough variance in the space spanned by the leading singular vectors'', but this is still not the case. Computing SV is indeed a great effort, but there is no adequate substitute available, yet. 
For a detailed description about the past findings about SV in atmospheric sciences we refer to \cite{bib:SVReview}.

Recently, \cite{bib:Frolov16} created empirical tangent linear models based on ensembles (ETLM). The main drawback is the rank deficiency of the tangent space spanned by the ensemble perturbations, because the number of ensemble members is generally much smaller than the dynamical dimension. They reduce the rank deficiency problem by localisation assuming that dynamical relations are limited to small finite length scales. This approximation can be justified for data assimilation cycles and short range weather forecasting but does not hold for longer range predictions. 

When using the tangent linear approximation in the classical SV computations, the algorithm might probably miss important non-linear developments. \cite{bib:GilmoreSmith} showed that an ensemble based on these perturbations is limited with respect to the quality of its spread skill relation (see also \cite{bib:Anderson}). To overcome this problem \cite{bib:Mu2000} introduced nonlinear singular vectors (NLSV) and the conditional non-linear optimal perturbations (CNOP) (\cite{bib:Mu03}). Both generalise the SV technique and further research in this field came up by \cite{bib:Mu08a,bib:Mu08b,bib:Mu09}.  \cite{bib:NonLinSV} investigated differences between NLSV and SV in dependence of the amplitude and \cite{bib:DuanHuo16} showed that CNOP is superior to SV in the Lorenz96 system (L96). But for NLSV and CNOP a non-linear optimisation problem has to be solved making these approaches very expensive.

In this paper we introduce another approach for estimating growing perturbations in dynamical systems which
is related to the classical SV technique. We approximate the singular vectors of a specific matrix, which consists of non-linear evolved perturbations of system states, using a block version of the
Arnoldi method (\cite{bib:Arnoldi}). This algorithm is matrix-free and avoids maintaining linearised or adjoint model versions. 
The Arnoldi Algorithm has been used also by \cite{bib:Wei05} in a study on error growth. But instead of computing singular vectors they aim for the leading finite time normal modes (FTNM) of linear propagators. 

We introduce the methodology in section 2 and describe the dynamical systems for tests in section 3. The scores for comparing the growth of different perturbations are given in section 4, the results are presented in section 5, followed by a summary and conclusions (section 6).\\

\section{Fast growing Perturbations}

\subsection{Validity of the Arnoldi approximation}

For the generation of strongly growing perturbations we make use of the Arnoldi method. In this section we give the basic definitions and describe the assumptions made. Moreover, we discuss the validity of the approximations and provide an estimate of its accuracy. 
We denote the development of a (continuous) dynamical system by the following function
\begin{equation}
\varphi_T: 
\C^n  \to \C^n, \ x \mapsto \varphi_T(x).
\end{equation}
Here, $T \in \R_+$ is an arbitrary period of time
and $x\in \C^n$ denotes an initial state of the system.
The number of dimensions of the dynamical system is denoted by $n \in \N_+$ and the phase space is $\C^n$. Hence, $\varphi_T(x)$ denotes the evolved state of $x$ after a time $T$. Based on this, we define the evolved increment function (EIF)
\begin{equation}\label{eq:EIF}
	I_{T,x_0,h}: S^n \to \C^n , \ v \mapsto  \varphi_T(x_0+v h) - \varphi_T(x_0),
\end{equation}
where $h \in \R_+$ is the amplitude of a perturbation in direction $v \in S^n$ and the $n$-sphere is denoted by $S^n= \{ x \in \C^n,\  ||x|| = 1 \}$. The  EIF contains the information about the growth of an initial perturbation in $x_0$ after the optimization time interval $T$. 
A perturbation of the state $x_0$ with amplitude $h$ having maximal growth within the optimization interval can be defined as follows:
\begin{equation}\label{eq:MaxGrow}
 \hat{v}_{T,x_0,h} := \underset{v \in S^n}{\operatorname{argmax}} \ ||I_{T,x_0,h}(v)||_N.
\end{equation}

Here, $||\cdot ||_N$ is a norm which measures the perturbation growth. In this paper the Euclidean norm is used for L96 and the Total Energy ``Norm'' for SWM (see section \ref{sec:TEN}). Equation (\ref{eq:MaxGrow}) is known in literature as the conditional non-linear optimal perturbation (CNOP, \cite{bib:Mu03}). We do not aim for a full computation of the CNOP but look for a suitable approximation of growing directions, which allows an efficient estimation of the local expanding subspaces in the models phase space. We introduce the evolved increment matrix (EIM) which consists of (non-linear) evolved perturbations  
\begin{equation}
Y_{T,h}(x_0) :=
\begin{pmatrix}
 I_{T,x_0,h}(e_1)  ,& \ldots
 & , I_{T,x_0,h}(e_n)
\end{pmatrix},
\end{equation}

where $x_0 \in \C^n$ is an (unperturbed) state of the dynamical system and $e_i$ is the $i-$th unit vector. 
The EIM consist of non-linear evolved perturbations. One could use a linear model for obtaining evolved increments as well, but we want to keep as much as possible of the non-linear dynamics. An EIM is \textbf{not} a tangent linear propagator of the system, it is rather a ``collection'' of non-linear (or linear) evolved states, merged in a matrix. 
By using an EIM, one has the ability to work with Krylov subspace methods instead of non-linear optimization solvers. But at the same time, it is not limited to linear growth. 

In particular, if the optimal perturbation $\hat{v}_{T,x_0,h}$ (Equation (\ref{eq:MaxGrow})) is equal to an arbitrary unit vector, the leading singular vector of the corresponding EIM is equal to this optimal perturbation. 
Otherwise a linear combination of non-linear evolved unit vector perturbations can be found. 
The larger the part of an arbitrary unit vector in the linear combination of $\hat{v}_{T,x_0,h}$ is, the smaller becomes the difference between EIM's first singular vector and CNOP.

Assuming a sufficiently smooth trajectory, the error depends on the curvature of the trajectories and on the size of $h$, which can be shown using Taylor approximation. Hence, for sufficiently small quantities the following approximation holds:
\begin{equation}\label{eq:approx_increments}
Y_{T,h}(x_0) v  = \sum_{i=1}^n v_i I_{T,x,h} (e_i) \approx
			 I_{T,x,h}(v) .
\end{equation}
 Here, $v_i$ denotes the $i-$th component of $v$.  

With this approximation, it follows that these matrix-vector-products are approximately equal to an EIF.
This is the legitimation for the changeover from a fully non-linear problem to the EIM in order to enable the use of Krylov subspace methods, such as the Arnoldi Algorithm. In addition, it is needed as an approximation for the upcoming matrix-vector products $Y_{T,h}(x_0) \cdot v$ within the Arnoldi algorithm.
The latter is the major reason for introducing the EIM, a recovery of CNOP is not the main goal here. But it is an important side effect, that the method can also cover non-linear effects, partially.
\\
%



To obtain the growing directions in the phase space the leading singular vectors of EIM have to be approximated. In this paper we concentrate on the approximation of the first right singular vector of EIM (EIM-SV).

For the computation of singular triplets (singular values and their singular vectors), the Krylov algorithm of Lanczos is widely used. The Lanczos method is applied to solve the equivalent eigenvalue problem with the matrix $Y\TT Y$ which in addition requires the adjoint $Y\TT$. 
In contrast to this, we introduce an approach which uses the Arnoldi method without using the adjoint of the EIM.
Hence, that approximation with the Arnoldi method is rather direct and avoids a detour through the equivalent eigenvalue problem to generate the Krylov subspaces.
 Adjoint-free resp. transpose-free approaches for singular triplet approximation are rare. There is a paper on this topic by \cite{bib:Berry95}.
These authors discussed a block Arnoldi method used for singular value approximation, too but this paper does not contain a theoretical background. We provide a theoretical justification below, in order to legitimate the application of perturbation generation this way. 

Applied to some matrix $B\in\C^{n\times n}$ the block Arnoldi method described in the Algorithm below starts with a (small) number $\ell\in\N$ of orthonormal vectors $q_1,\ldots,q_\ell\in\C^n$ and computes an extended orthogonal basis matrix $Q_r=(q_1,\ldots,q_r)\in\C^{n\times r}$, $r>\ell$ as well as a square matrix $H_r\in\C^{r\times r}$ satisfying
\begin{equation}\label{ArnRes}
 BQ_r=Q_rH_r+G_rE_\ell^T,
\end{equation}
where $G_rE_\ell^T$ is a residual of low rank $\ell$ (\cite{bib:Saad96}).
In detail $E_\ell\in\R^{r\times \ell}$ contains the last $\ell$ columns of the identity matrix and $G_r\in\C^{n\times\ell}$ is orthogonal to $Q_r$, i.e. $Q_r\TT G_r=0$.
The last property shows that $H_r=Q_r\TT BQ_r$ is the projection of $B$ in the subspace $range(Q_r)$.
For $\ell=1$  one obtains the original Arnoldi method.
\par
The basic idea here is to use the singular values of $H_r$ as approximations of those of $B$.
In the ideal case with zero residual $G_r=0$ the identity (\ref{ArnRes}) shows that $Q_r$ is the basis of an invariant subspace of $B$ and the eigenvalues of $H_r$ are also eigenvalues of $B$.
However, this equality no longer holds for the singular values of $H_r$ and $B$.
Hence, the proposed approximation especially makes sense for problems where there is a certain correlation between eigen- and singular values. 
In order to understand accuracy issues the difference between these two sets of singular values will be investigated.

Our estimates are based on a well-known error estimate (e.g. \cite{bib:Parlett80}) for Hermitian matrices $A\in\C^{n\times n}$.
It states that if $(\tilde x,\tilde\lambda)$ is an approximate eigen-pair of $A$ having a small residual $\varepsilon\ge0$, then there exists an eigenvalue $\lambda$ of $A$ within this distance to $\tilde\lambda$:
\begin{equation}\label{everr}
 \|A\tilde x-\tilde\lambda\tilde x\|_2\le\varepsilon\|\tilde x\|_2\ \Rightarrow\ |\lambda-\tilde\lambda|\le\varepsilon.
\end{equation}
This error estimate will be applied to the hermitian matrix
\begin{equation}\label{blkmat}
 A=\begin{pmatrix} 0&B\TT \\B&0 \end{pmatrix}
\end{equation}
which has the singular values of $B$ as eigenvalues.
The error will depend on parts of the matrix $B$ not contained in the projection $Q\TT BQ$. Therefore, an orthogonal extension $W_r\in\C^{n\times(n-r)}$ of $Q_r$ is required such that $(Q_r,W_r)$ is unitary.
\begin{theorem}\label{Theorem1}
Let (\ref{ArnRes}) hold for $B\in\C^{n\times n}$ and $H_r=U\Sigma V\TT$ be the SVD of $H_r$ with $\Sigma=diag(\sigma_1,\ldots,\sigma_r)$.
Then, for each $i\in\{1,\ldots,r\}$ there exists a singular value $\hat\sigma$ of $B$ satisfying the following estimate
\begin{equation}\label{sigerra}
 |\hat\sigma-\sigma_i|\le \frac{1}{\sqrt{2}} \sqrt{ \|Q_r\TT BW_r\|_2^2+\|G_r\|_2^2 }.
\end{equation}
\end{theorem}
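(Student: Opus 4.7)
The plan is to apply the hermitian perturbation bound (\ref{everr}) to the augmented matrix $A$ defined in (\ref{blkmat}), whose eigenvalues are $\pm$ the singular values of $B$. For a fixed index $i$ let $u_i,v_i\in\C^r$ be the left/right singular vectors of $H_r$ associated with $\sigma_i$, and define the candidate eigenvector
\begin{equation*}
\tilde x=\frac{1}{\sqrt{2}}\begin{pmatrix} Q_r v_i \\ Q_r u_i \end{pmatrix}\in\C^{2n}.
\end{equation*}
Because $Q_r$ has orthonormal columns and $u_i,v_i$ are unit vectors, $\|\tilde x\|_2=1$. I would then try to show that $\tilde x$ is an approximate eigenvector of $A$ for the eigenvalue $\sigma_i$.

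The residual splits into two blocks,
\begin{equation*}
A\tilde x-\sigma_i\tilde x=\frac{1}{\sqrt 2}\begin{pmatrix} B\TT Q_r u_i-\sigma_i Q_r v_i \\ BQ_r v_i-\sigma_i Q_r u_i\end{pmatrix},
\end{equation*}
which I would bound separately. For the lower block, the Arnoldi relation (\ref{ArnRes}) gives $BQ_r v_i=Q_rH_rv_i+G_rE_\ell\TT v_i=\sigma_i Q_r u_i+G_rE_\ell\TT v_i$, so the block equals $G_rE_\ell\TT v_i$ and has norm at most $\|G_r\|_2$. For the upper block I would insert the orthogonal decomposition $I=Q_rQ_r\TT+W_rW_r\TT$: using $Q_r\TT G_r=0$ in (\ref{ArnRes}) yields $Q_r\TT BQ_r=H_r$, hence $Q_r\TT B\TT Q_r u_i=H_r\TT u_i=\sigma_i v_i$, so only the $W_r$-component survives and the upper block reduces to $W_rW_r\TT B\TT Q_r u_i$, whose norm is bounded by $\|W_r\TT B\TT Q_r\|_2=\|Q_r\TT BW_r\|_2$.

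Combining the two bounds gives $\|A\tilde x-\sigma_i\tilde x\|_2^2\le \tfrac12(\|Q_r\TT BW_r\|_2^2+\|G_r\|_2^2)$, and applying (\ref{everr}) to $A$ produces an eigenvalue $\lambda$ of $A$ with $|\lambda-\sigma_i|$ bounded by the right-hand side of (\ref{sigerra}). The one subtle point I would flag as the main obstacle is that the eigenvalues of $A$ are $\pm\hat\sigma_j$ for the singular values $\hat\sigma_j$ of $B$, so the closest eigenvalue to the nonnegative number $\sigma_i$ might a priori be $-\hat\sigma$. I would handle this by noting that in that case $\sigma_i+\hat\sigma\le\varepsilon$ with both summands nonnegative forces $|\sigma_i-\hat\sigma|\le\varepsilon$ as well, so in either case one obtains a nonnegative singular value $\hat\sigma$ of $B$ within the claimed distance of $\sigma_i$, which finishes the proof.
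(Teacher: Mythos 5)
Your proposal is correct and follows essentially the same route as the paper: the same augmented hermitian matrix $A$, the same candidate vector $\bigl(Q_rv_i;\,Q_ru_i\bigr)$, the same block-wise residual computation via $I=Q_rQ_r\TT+W_rW_r\TT$ and the Arnoldi relation, and the same application of the hermitian eigenvalue perturbation bound. The only difference is that you explicitly resolve the $\pm\hat\sigma$ sign ambiguity in the spectrum of $A$, a point the paper's proof passes over silently; your resolution is valid.
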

\begin{proof}
With the singular vectors $u_i,v_i$ the transformed vectors $Q_ru_i$, $Q_rv_i$ are approximations to singular vectors of $B$ since
 $\begin{pmatrix}
 Q_r v_i\\
 Q_r u_i
 \end{pmatrix},\,i\in \{ 1,\ldots,r \}$ are approximations to eigenvectors of $A$ from \eqref{blkmat}.
 With the decomposition $I=Q_rQ_r\TT+W_rW_r\TT$ it holds due to (\ref{ArnRes}) that
\begin{eqnarray*}
 (A-\sigma_i I)\begin{pmatrix} Q_rv_i\\ Q_ru_i \end{pmatrix}
  &=&\begin{pmatrix}
   Q_rQ_r\TT B\TT Q_ru_i-\sigma_iQ_rv_i+W_rW_r\TT B\TT Q_ru_i\\
   BQ_rv_i-\sigma_iQ_ru_i
  \end{pmatrix}\\
 &=& \begin{pmatrix}
   W_r(Q_r\TT B W_r)\TT u_i\\
   G_r E_\ell^T v_i
  \end{pmatrix}.
\end{eqnarray*}
Since $\|Q_rv_i\|_2^2+\|Q_ru_i\|_2^2=2$ the error estimate (\ref{everr}) follows with $2\varepsilon^2=\|W_r(Q_r\TT B W_r)\TT u_i\|_2^2+\|G_r E_r^T v_i\|_2^2\le\|Q_r\TT B W_r\|_2^2+\|G_r\|_2^2$ due to orthogonality $Q_r\TT G_r=0$.
$ \blacksquare$
\end{proof}
Some discussion is required on the meaning of estimate (\ref{sigerra}).
The residual $\|G_r\|_2$ is available during execution of the Arnoldi algorithm and its size may be used as a stopping criterion for choosing the dimension $r$ of the basis.
So, it may be quite small and for the moment we neglect it in order to concentrate on the meaning of the norm $\|Q_r\TT BW_r\|_2$.
\par
This norm is related to the non-normality of the matrix $B$.
Normal matrices commute with their adjoint, $BB\TT=B\TT B$ and the triangular matrix $R_B = \Lambda_B+N_B$ in the Schur normal form $ B = S_B R_B S_B^*$ with unitary $S_B$ is diagonal, $N_B=0$.

Hence, if $\|BB^* - B^*B\|$ or $\|N_B\|$ becomes small relative to $\| B \|_2$ the errors become small.

A convenient way to describe non-normality is its departure (\cite{bib:Henrici62})
\begin{align}\label{devnrm}
 \dep_F(B):=\|N_B\|_F=\sqrt{\|B\|_F^2-\|\Lambda_B\|_F^2}.
\end{align}

Using the departure gives the possibility to derive another more concrete variant of the estimate.
In these discussions often the unitarily invariant Frobenius matrix norm $\|B\|_F^2:=\sum_{i,j=1}^n|b_{ij}|^2$ is used in literature.  Unfortunately, the Frobenius norm may lead to weaker estimates in many cases.
\begin{lemma}
Let $Q\in\C^{n\times r}$ be orthonormal and $W$ a basis extension such that $(Q,W)$ is unitary.
If $BQ=QH$, then
\begin{align}\label{cschranke}
 \|Q\TT BW\|_2\le \|Q\TT BW\|_F\le \sqrt{\dep_F(B)^2-\dep_F(H)^2}.
\end{align}
\end{lemma}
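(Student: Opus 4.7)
The left inequality $\|Q^*BW\|_2\le\|Q^*BW\|_F$ is the standard relation between spectral and Frobenius norms, so the whole work goes into the right inequality. The plan is to exploit the unitary invariance of $\|\cdot\|_F$ by passing to the basis $U=(Q,W)$ and reading off the block structure of $U^*BU$.

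First I would observe that the hypothesis $BQ=QH$ together with $W^*Q=0$ forces the lower-left block to vanish:
\begin{equation*}
U^*BU=\begin{pmatrix} Q^*BQ & Q^*BW \\ W^*BQ & W^*BW \end{pmatrix}
     =\begin{pmatrix} H & Q^*BW \\ 0 & W^*BW \end{pmatrix}.
\end{equation*}
Hence $U^*BU$ is block upper triangular. Unitary invariance of the Frobenius norm then yields
\begin{equation*}
\|B\|_F^2=\|U^*BU\|_F^2=\|H\|_F^2+\|Q^*BW\|_F^2+\|W^*BW\|_F^2.
\end{equation*}

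Next I would exploit the block triangular structure at the level of spectra: the eigenvalues of $B$ (counted with multiplicity) are precisely the union of the eigenvalues of $H$ and the eigenvalues of $W^*BW$. Recalling that the diagonal $\Lambda_X$ from the Schur decomposition of a matrix $X$ collects exactly its eigenvalues, this gives
\begin{equation*}
\|\Lambda_B\|_F^2=\|\Lambda_H\|_F^2+\|\Lambda_{W^*BW}\|_F^2.
\end{equation*}

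Finally, I would subtract these two identities and use the definition $\dep_F(X)^2=\|X\|_F^2-\|\Lambda_X\|_F^2$ from \eqref{devnrm} to obtain
\begin{equation*}
\dep_F(B)^2=\dep_F(H)^2+\|Q^*BW\|_F^2+\dep_F(W^*BW)^2,
\end{equation*}
and drop the nonnegative term $\dep_F(W^*BW)^2$ to conclude $\|Q^*BW\|_F^2\le\dep_F(B)^2-\dep_F(H)^2$. The only place that requires a little care is justifying $\|X\|_F^2=\|\Lambda_X\|_F^2+\|N_X\|_F^2$ for the Schur form $X=S_X(\Lambda_X+N_X)S_X^*$, which again reduces to unitary invariance of $\|\cdot\|_F$; there is no real obstacle beyond keeping track of the block bookkeeping.
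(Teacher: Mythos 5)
Your proposal is correct and is essentially the paper's own argument: the paper inserts $I=QQ\TT+WW\TT$ into $\|B\|_F^2$ where you conjugate by $U=(Q,W)$, but both routes produce the same block decomposition of the Frobenius norm, the same splitting of $\Lambda_B$ into the eigenvalues of $H$ and of $W\TT BW$, and the same final identity $\dep_F(B)^2=\dep_F(H)^2+\|Q\TT BW\|_F^2+\dep_F(W\TT BW)^2$ from which the bound follows by dropping the last term.
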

\begin{proof}
Again with $QQ\TT+WW\TT=I$ holds
\begin{align*}
 \dep_F (B)^2 =& \| B(QQ\TT +WW\TT ) \|_F^2- \| \Lambda_B \|_F^2 \\
  =& \|QH\|_F^2+\|BW\|_F^2-\| \Lambda_B \|_F^2 \\
  =& \| H \|_F^2 + \|Q\TT BW \|_F^2+ \|W\TT BW\|_F^2-\|\Lambda_B\|_F^2.
\end{align*}
Now, since $Q$ defines an invariant subspace of $B$, the eigenvalues in $\Lambda_B$ consist of those from the diagonal part $\Lambda_H$ of $H$ and the diagonal part $\tilde\Lambda$ of $W\TT BW$.
Hence, rearranging the terms gives
\begin{align*}
\|Q\TT BW\|_F^2=\dep_F(B)^2-\dep_F(H)^2-\dep_F(W\TT BW)^2,
\end{align*}
from which (\ref{cschranke}) follows. $\blacksquare$
\end{proof}

Hence, with (\ref{sigerra}) and for $G_r=0$ the error estimate for the singular values becomes
\begin{equation}
 |\hat{\sigma}(B)-\sigma_i(H_r)|\le\frac1{\sqrt{2}}\sqrt{\dep_F(B)^2-\dep_F(H_r)^2}. 
\end{equation}
This shows that the error will be small if $B$ is not too far from normality and/or if the Krylov subspace spanned by $Q_r$ is large enough to collect sufficient amount of information about $B$ in the projection $H_r=Q_r^\ast BQ_r$.
\par
In time-dependent partial differential equations often the part with the highest order is linear and its discretisation leads to a hermitian or skew-hermitian matrix depending on the order.
Lower order terms correspond to compact perturbations, only.
In these cases $B$ may be nearly normal according to the following estimates due to \cite{bib:LeeSL95}.
It uses the hermitian ${\cal H}(B):=\frac12(B+B\TT)$ and skew-hermitian ${\cal S}(B):=\frac1{2i}(B-B\TT)$ parts of matrix $B$ and states that
\begin{align}\label{Lee}
 \dep_F(B)^2= 
  2(\|{\cal H}(B)\|_F^2-\|Re(\Lambda_B)\|_F^2)=
  2(\|{\cal S}(B)\|_F^2-\|Im(\Lambda_B)\|_F^2).
\end{align}
So, near-normality of $B$ is given, if its skew-hermitian or its hermitian part is small compared to $\|B\|$ leading to small errors in singular values.\\
 The ratio $\|BB^* - B^*B\|_2 \ /\ \|B\|_2$  is favourable in both systems ($< 0.05$ in L96 and $< 0.1$ in SWM). Unfortunately, Formula (\ref{cschranke}) provides only a weak estimate for both systems. Anyway, computations showed that e.g. twelve dimensional subspaces for L96 (with 50 dimensions) achieve a ratio of $\|H\|_2 / \|B\|_2 >0.8$. Almost the same holds for 40 dimensional subspaces in SWM (1587 dimensions).

\subsection{Block Arnoldi Algorithm}

The Arnoldi method is matrix-free and does not require knowledge of the whole EIM. 

As stated above, the ability to approximate matrix-vector-products $Y_{T,h}(x_0) \cdot w$ of the EIM with arbitrary vectors is needed, only.
This can be done by using the approximation (\ref{eq:approx_increments}). 
Hence, although the EIM is the central matrix for the 
singular vector approximation, the matrix is not used practically in the algorithm described below.

The Arnoldi method simultaneously generates a sum of Krylov subspaces with an orthonormal basis $Q_r$ and also a generalized Hessenberg matrix $H_r$, which is the best representation of $B$ within that subspace.

In this article a block extension of the classical Arnoldi algorithm is used, (\cite{bib:block-arn}). The block method allows to start the iteration with $\ell \in \N_+$ linearly independent vectors. 
The block size within the iteration process is implicitly specified by the number $\ell$ of starting vectors, so these two values are identical. The dimension of the generated subspace equals the number of iteration loops times the block size.
We call the perturbations generated this way block Arnoldi perturbations (BAP). The procedure to compute BAP, is described in Algorithm BAP below.

The block in the $i-$th row and the $j-$th column of the block Hessenberg matrix $H$ is denoted with $H_{[i,j]} \in \C^{\ell \times\ell}$.
Furthermore, $(\cdot)_{[i]}$ is a column vector, which denotes the $i$-th column of the corresponding matrix and $Q=(Q^{(1)},\ldots ,Q^{(m)}) \in \C^{n \times\ell m}$ contains the orthonormal basis of the generated Krylov subspace.\\[0mm]

\textbf{Algorithm BAP}\\[-8mm]
\begin{algorithmic}[1]\Statex
\Procedure{BAP}{$T,h,x_0,Q^{(1)},m,k$}
\Statex $T \in [0,\infty)$ \Comment{optimization time}
\Statex $h \in [0,\infty)$ \Comment{amplitude of perturbation}
\Statex $x_0 \in \C^n$ \Comment{unperturbed state of the system}
\Statex $Q^{(1)} \in \C^{n\times\ell}$\Comment{$\ell$ initial vectors merged in a matrix}
\Statex $m \in \N$\Comment{number of loops within Arnoldi algorithm}
\Statex $k \leq m\ell\in \N$\Comment{number of BAP to be returned}
\Statex

\State $Q^{(1)} \gets \text{orth}(Q^{(1)})$ \Comment{orthonormalization}

\For{$j=1 :  m$}
\For{$i=1:\ell$}

\State $W_{[i]} \gets I_{T,x_0,h} \left( Q_{[i]}^{(j)} \right) $\Comment{$W \in \C^{n \times\ell}$ }
\EndFor

\For{$i=1:j$} \Comment{Gram-Schmidt orthogonalization}
\State $H_{[i,j]} \gets Q^{(i)\, T\,} W$

\State $W \gets W-Q^{(i)}H_{[i,j]}$
\EndFor

\State $[Q^{(j+1)} ; H_{[j+1,j]}] \gets \text{qr}(W)$\Comment{QR-decomposition}
\EndFor

\State $[U ; \Sigma ; V ] \gets \text{svd} \left( H \right)$ \Comment{singular value decomposition}
\For{$i=1:k$}
\State $P_{[i]} \gets Q V_{[i]}$  \Comment{$Q = (Q^{(1)}, \ldots , Q^{(m)}) \in \C^{n\times\ell m}$}

\EndFor

\State \textbf{return} $P$

\EndProcedure

\end{algorithmic}
\textbf{end}\\

The first $r$ vectors of the basis matrix $Q$ are denoted with $Q_r$ (suitable for $H_r$ and $H$). These are computed in the algorithm up to line 12 and satisfy Equation (\ref{ArnRes}).
The final transformation leads with $H_r = U \Sigma V^*$ to an approximate singular vector relation 
\begin{equation}
B(Q_r V) = (Q_r U) \Sigma + G_r E_r^T V.
\end{equation}

The matrix $P\in\C^{n\times k}$ contains the first $k$ right singular vectors of $Y_{T,h}(x_0)$, which are the first $k$ columns of $Q_r V$. For this paper
always $k=1$ is used. Hence, BAP refers here to the approximation of the first singular vector only.

This routine allows the generation of initial perturbations for strongly (non-linear) growth. There is no need for an expensive classical optimization solver nor the usage of an equivalent eigenvalue problem. It allows to compute these approximations without linear or adjoint models of the system.

The cost for BAP (in complex systems) are mainly determined by the number of required model integrations.
This can be calculated with the following equation:
\begin{equation}
IG_{\text{BAP}}(l,m,T,\Delta t) = ml \ceil[\bigg]{\left( \frac{T}{\Delta t} \right) }. 
\end{equation}
where $\Delta t$ denotes the discrete time step of the model and $\ceil[\big]{\cdot}$ denotes the ceiling function. It may be of great advantage, that each Arnoldi loop can be divided into $l$ parallel computations.

\subsection{Starting Arnoldi}

The choice of the initial vectors for the block Arnoldi iteration is relevant for a good performance. We test two different strategies of choosing initial vectors: random initialisation and chord vectors as differences between states on the past trajectory (see Figure \ref{fig:chord}). Random vector starts provide a benchmark to compare with. The choice of chord vectors may be varied by choosing length and distance between two consecutive chords. For the experiments in L96 and SWM we used \textit{two} discrete time steps from start to end of one chord and skip 13 steps between two of them. 

We assume that directions obtained from chord vectors are more closely related to the local dynamics and, because they are derived from past states, the chords represent (almost) balanced directions. For shorter periods of time trajectories may move in a relatively small subspace of the complete phase space. Hence, a span of chord vectors should lead to a richer subspace and balanced which is more relevant for the local dynamics and may provide a better start up for the Arnoldi iteration than a random selection of perturbations. Nevertheless, the first chord vector alone will probably not be a good initial choice because it is almost a tangent of the trajectory.  

Differences between states separated by a relativ long period of time have already been used to generate perturbations in dynamical systems. For example, \cite{bib:Rand_Field} as well as \cite{bib:Mag_Rand_Field} used differences between weather patterns from the past as ``Random Field'' perturbations to initialize ensemble weather forecasts.

\begin{figure}[h]
\centering
  \includegraphics[trim = 0mm 0mm 0mm 0mm, clip,width=12cm]{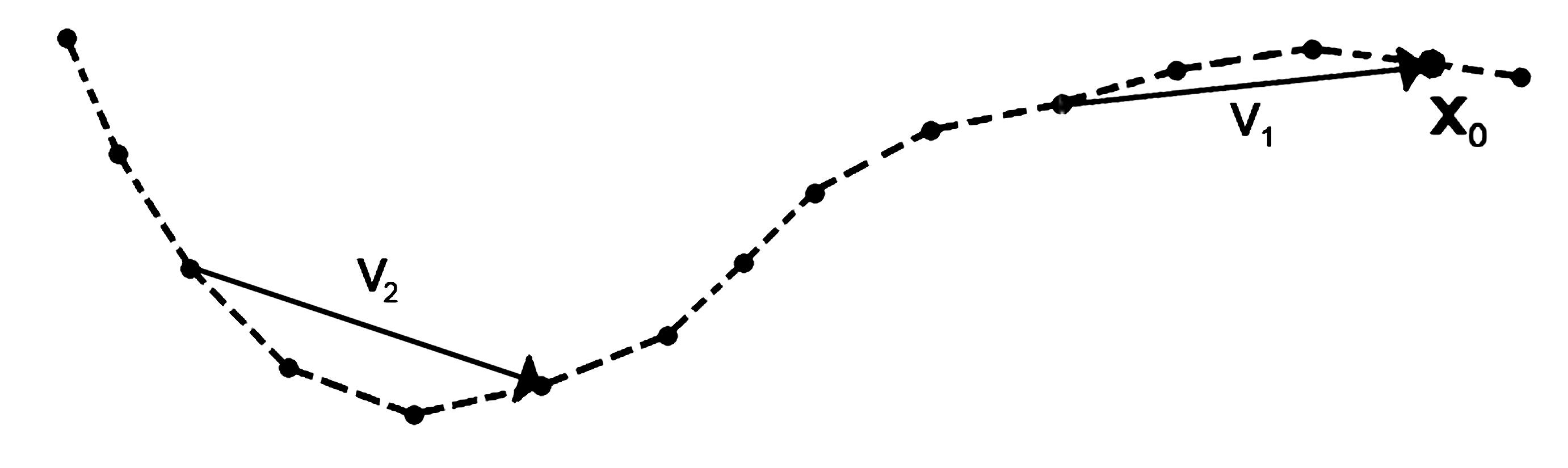}
  \caption[Bildunterschrift]{Illustration of chord vectors. Dashed line: trajectory, arrows: first and second chord vectors $v_1$ and $v_2$ of the state $x_0$}\label{fig:chord}
\end{figure}

\section{Dynamical systems}

We test the BAP technique for two different dynamical systems. These are L96 (\cite{bib:l96-paper}) and a shallow water model (SWM, e.g. \cite{bib:SWEq}) on a two dimensional domain. L96 is derived from fluid convection and is based on a system of ordinary differential equations with a scalable number of degrees of freedom. It is defined by
\begin{eqnarray} \label{lorenz96}
\frac{d}{dt}y_i = -y_{i-2}y_{i-1}+y_{i-1}y_{i+1}-y_i+F,
\end{eqnarray}
where $y_i$ is the $i$-th component of $y=y(t) \in \R^K$, $i \in \{1, ... , K\}$ and $y_{-1} := y_{K-1} , y_{0}:= y_K$ and $y_{K+1}:=y_{1}$. The number of dimensions is given by $K>3 \in \N $ and  $F \in \R_+$ is a constant value, the so called ``Forcing Term''. In this paper $F=8$ and $K=50$ is used, which is similar to the choice Lorenz originally discussed ($F=8,K=36$) and we start the integrations perturbing the fixed point, $\hat{y}=(F,\dots,F)$. Under the described conditions L96 shows chaotic behaviour. For the computation the standard fourth order Runge Kutta method is used with a discrete time step of $0.01$.

In contrast to L96 the SWM is based on hyperbolic partial differential equations. It describes the evolution of waves under the condition, that the horizontal length scales are much larger than the vertical one. The basic form of this model is given by
\begin{eqnarray}\label{SWM2D}
\frac{\partial h}{\partial t} + \frac{\partial (uh)}{\partial z_1}+ \frac{\partial (vh)}{\partial z_2} = 0 \nonumber  \\
\frac{\partial (uh)}{\partial t} + \frac{\partial (uh)}{\partial x_1}u+
 \frac{1}{2} g \frac{\partial}{\partial z_1} h^2 + \frac{\partial (vh)}{\partial z_2}u = 0\\
\frac{\partial (vh)}{\partial t} + \frac{\partial (vh)}{\partial z_1}u+ \frac{\partial (vh)}{\partial z_2}v +
\frac{1}{2} g \frac{\partial}{\partial z_2}  h^2 = 0 \nonumber
\end{eqnarray}
Here, $t \in [0,\infty)$ is the time variable, $z=(z_1,z_2)^T  \in \Omega$ the two-dimensional, horizontal space variable and $g$ is the acceleration due to gravity, which is $g=9.81$. The functions $h=h(t,z)$, $u=u(t,z)$,  $v=v(t,z)$ describe the height of the fluid above ground, the speed of the fluid in direction $z_1$ and $z_2$ respectively. The first equation ensures mass conservation, the two others impulse conservation. The boundary conditions are of Dirichlet type, which means that
$u(t,z)=0, v(t,z)=0 \ \text{for all} \ z\in \partial \Omega $, which causes reflectivity at the domains boundary. 
In this paper the SWM is solved on the two dimensional rectangular domain $\Omega = [0,22]^2 \subset \R^2$. We use a discrete time step of $0.01$ and a mesh size for the domain of $1$, which leads to $23^2=529$ grid points ($441$ inner plus $88$ boundary points).
Together with the three physical variables (height of the fluid, velocity in both horizontal directions) at each grid point, we obtain a dynamical system with $1587$ degrees of freedom in total. The hyperbolic initial-boundary-problem is solved with the finite difference scheme of Lax-Wendroff (\cite{bib:lax-wend-orig}). \cite{bib:SWM_Saiduzz} showed that the Lax-Wendroff scheme is adequate for solving the SWM.

\subsection{Total energy in SWM}\label{sec:TEN}

The SWM is a model of a physical system. The chosen norm for perturbation generation should take this into account and a measurement based on physical properties is more convenient here than the Euclidean norm. 
 The total energy of a state $x = (h,u,v)$ in SWM can be computed with 
\begin{equation}
E(x) =\frac{1}{2} \int_\Omega h (u^2 + v^2) + g h^2 d\Omega   
\end{equation} 
(see \cite{bib:SWMTEN}). Obviously, $E(x)$ is not a true norm since it lacks homogeneity. However, the name ``Total Energy Norm'' can be found quite often. 

In order to remove the problem, that $E$ is not a true norm, transformed states are introduced:  
\begin{equation}\label{eqn:transformed}
\tilde{x} := (\tilde{h}, \tilde{u}, \tilde{v}) := \left(\sqrt{\frac{g}{2}} h, \frac{1}{2}\sqrt{h} u, \frac{1}{2}\sqrt{h} v \right).
\end{equation}
 Then the equality
$
||\tilde{x}||_2^2 = E(x)
$
holds. With these modifications, the Euclidean norm can be used for energy measurement and also the standard scalar product is available. 

 We run the SWM also with the euclidean norm and obtained qualitatively similar results, but do not discuss these here.

\section{Measuring perturbation growth}\label{sect:growth}

The comparison of the different perturbation methods with respect to their ability of generating growth is mainly done by computation of short term growth rates within a limited period of time. The logarithm of the growth rates are used, since we work with exponential growth rates (EGR) which are defined as follows:
\begin{equation}
\text{EGR}_{x_0,\text{PT},\Delta t} :\R^+ \to \R , \ t \mapsto
 \frac{1}{\Delta t}\left(\log \left( \frac{||\varphi_t(x_0+v) - \varphi_t(x_0)||}{||\varphi_{t-\Delta t}(x_0+v) - \varphi_{t-\Delta t}(x_0)||} \right) \right).
\end{equation}
Here, $x_0$ is an unperturbed state of the system and $v \in \C^n$ is a perturbation defined by the perturbation technique PT. The discrete time step $\Delta t$ is set equal to the discrete time step of $0.01$. The amplitude $h$ (see Equation~(\ref{eq:EIF})) of  perturbations is $h=0.015$ in L96 and $h=0.035$ in SWM throughout this paper. 
The EGR of a specific perturbation technique will vary at different states of the system. Hence, we work with the arithmetic mean of several EGR in order to obtain a mean exponential growth rate (MEGR) (cf.                                                                                                                                                                                                                                                                                                                                                                                                                                                                                                                                                                                                                                                                                                                                                                                                                                                                                                                                                                                                                                                                                                                                                                                                                                                                                                   \cite{bib:Magnusson-diss}). The presented MEGR for different perturbation techniques are always computed from a mean of $100$ EGR, which are obtained at randomly chosen states along the reference trajectory.
In order to ensure that all transients have decayed, at least $1500$ time steps were computed before the perturbation experiments start.

 Besides the MEGR, for comparison of the growth for different techniques, an integral of the MEGR is used as value of reference. To obtain a single scalar value, we approximate the integral within the optimization time and relate it to the corresponding value of EIM-SV. EIM-SV perturbations are the basis for BAP thus a comparison of both is naturally and reasonable.
This leads us to the definition of the relative exponential growth integral (REGI):
\begin{equation}\label{REGI}
			REGI_{T,\text{PT},\Delta t} := 
			\frac{  \int_0^{T} MEGR_{\text{PT},\Delta t}(t) \ dt  }{\int_0^{T} MEGR_{\text{EIM-SV},\Delta t}(t) \ dt },
\end{equation}
where the growth rates are aggregated within the optimization time $T=0.2$. 

 Please note that the construction of EIM-SV and BAP perturbations is only based on states which occur at the end of the optimization period. The exact behaviour before and the whole behaviour after this period is not taken into account. In contrast to this, the MEGR and REGI are based on momentary growth rates. This is beneficial due to additional information but it does not fit exactly the way the optimization of perturbation growth is done.

For SWM also the computational costs of BAP and the full EIM-SV are compared, relatively.  
A detailed comparison of the costs is of less interest for L96, due to its smaller size.

\section{Results}

\subsection{Lorenz 96}

In L96 EIM-SV perturbations lead to growth curves with a strong exponential growth in the beginning,
which decay relatively fast later on. 
Figure \ref{fig:L96_GR} compares the mean exponential growth rates of BAP with those from the full EIM-SV, where the BAP approximations have different numbers of iterations. Figure \ref{fig:L96_GR}, (a)-(c)  present the BAP experiments with randomly selected initial vectors (BAP-R) while Figure \ref{fig:L96_GR}, (d)-(f) shows the SV approximations starting from chord vectors (BAP-C). The REGI measurement for both BAP-R and BAP-C can be found in Table \ref{tab:L96}.
The short-term growth peak within the optimisation time and converge to a certain level of average error growth for longer lead times. This convergence level is an intrinsic property of the dynamical system and is determined by the largest Lyapunov exponent (see \cite{bib:Lyapunov_original, bib:Lyapunov_1992}). 

BAP obtained from one iteration loop (in Algorithm BAP $m=1$) do not involve a (real) Arnoldi step. In this case the algorithm simply generates the best directions within in the subspace spanned by the initial vectors. Consequently, in Figure \ref{fig:L96_GR} (a) it is seen that one random perturbation does not show positive growth but a
shrinking behavior in the beginning. 

\begin{figure}[h]
\centering
  \includegraphics[trim = 0mm 0mm 0mm 0mm, clip,width=15.5cm]{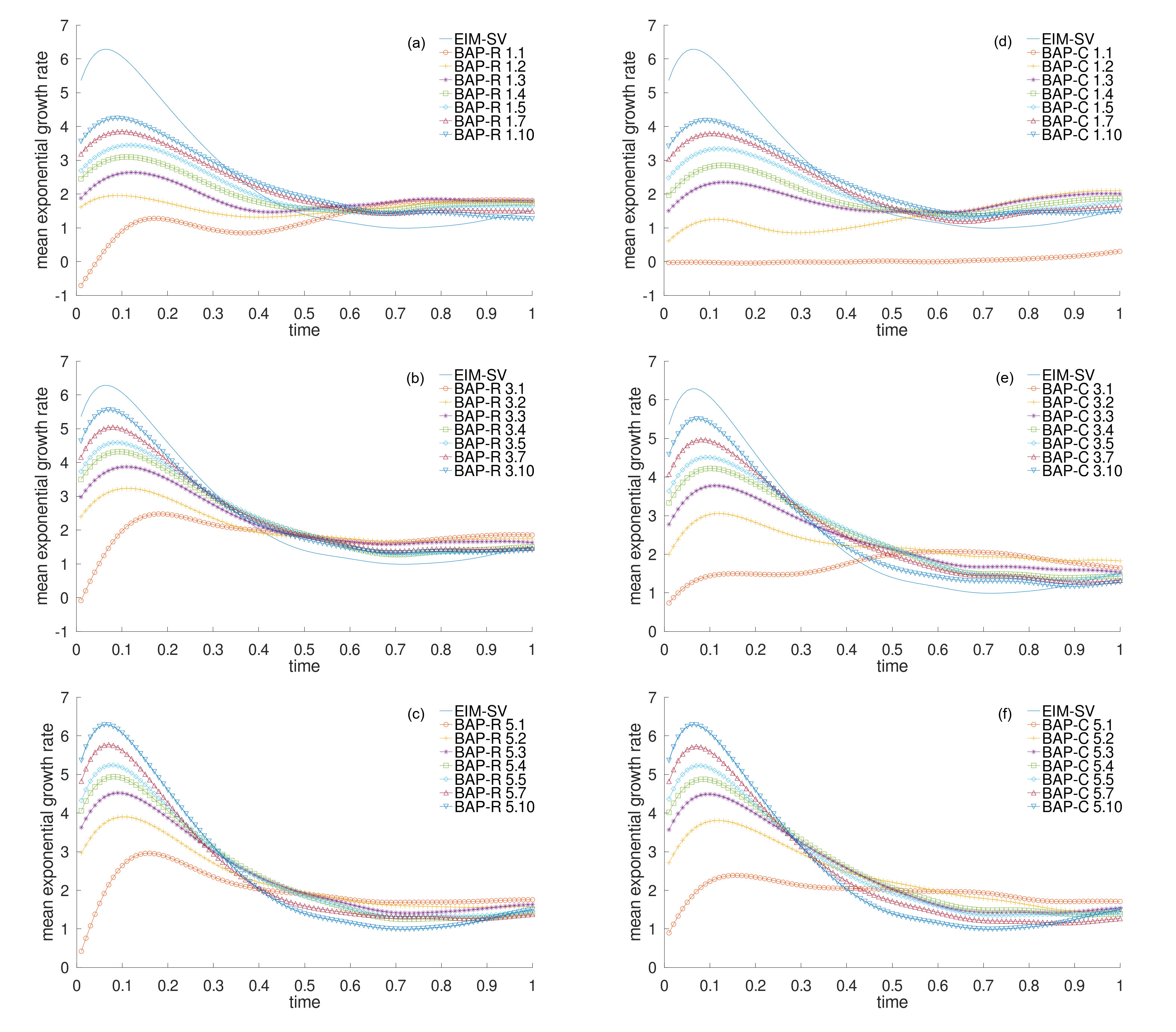}
  \caption[Bildunterschrift]{MEGR for L96 - EIM-SV and \underline{r}andom vector started BAP (BAP-R) (left hand side) or \underline{c}hord vector started BAP (BAP-C) (right hand side). BAP-R/C i.j denotes an approximation of EIM-SV obtained by the usage of i initial vectors and j iteration loops for BAP. Plot (a) BAP-R $i=1$, (b) BAP-R $i=3$, (c) BAP-R $i=5$, (d) BAP-C $i=1$, (e) BAP-C $i=3$ and (f) BAP-C $i=5$. }\label{fig:L96_GR}
\end{figure}

\clearpage

\begin{table}[bt]
\caption{L96 - BAP - REGI with $T=0.2$ (see \autoref{REGI})} \label{tab:L96}
\centering
\begin{threeparttable}
\begin{tabular}{lrrrrrrrrrrr}
\headrow
\multicolumn{6}{r}{\textbf{number of random initial vectors}} & \multicolumn{6}{r}{\textbf{number of initial chord vectors}}\\
\headrow
\textbf{iterations} & \textbf{1} & \textbf{2} & \textbf{3} & \textbf{4} & \textbf{5} & & \textbf{1} & \textbf{2} & \textbf{3} & \textbf{4} & \textbf{5}\\
\hline
1&  0.12 &  0.23 &  0.30 &  0.36 &  0.40  & &  0.00 &  0.10 &  0.23 &  0.30 &  0.36 \\
2&   0.33 &  0.46 &  0.53 &  0.59 &  0.64 & & 0.19 &  0.40 &  0.50 &  0.57 &  0.63 \\
3&   0.43 &  0.55 &  0.64 &  0.70 &  0.75  & & 0.38 &  0.52 &  0.62 &  0.69 &  0.75 \\
4&    0.52 &  0.64 &  0.72 &  0.77 &  0.81 & & 0.46 &  0.61 &  0.70 &  0.76 &  0.81 \\
5&   0.57 &  0.69 &  0.76 &  0.81 &  0.85  & &  0.55 &  0.67 &  0.75 &  0.81 &  0.85 \\
7&    0.64 &  0.76 &  0.82 &  0.88 &  0.92 & &  0.63 &  0.74 &  0.82 &  0.87 &  0.92 \\
10&   0.71 &  0.82 &  0.89 &  0.95 &  1.00 & &  0.69 &  0.80 &  0.89 &  0.95 &  1.00 \\
\end{tabular}
\end{threeparttable}
\end{table}

Because of its construction, the first chord vector
is almost tangential to the trajectory. Hence, perturbations pointing in this direction show almost no growth in the beginning (Figure \ref{fig:L96_GR} (d)). 
This disadvantage is motivation for using more chord
vectors and it vanishes, indeed, if this is done (Figure \ref{fig:L96_GR}, (e) (f)). BAP performance in L96 is very similar for chord and random initialisation (Table \ref{tab:L96}).

%
One can see that the EIM-SV perturbations are fully recovered if and only if the size of the Krylov space reaches the size of the system. Krylov spaces of smaller size should cover growing directions above average, but one would expect that at least a small part of the EIM-SV is not covered therein.
It is of interest that large parts of the growth are covered already in relatively small subspaces, which is the case here.

The computational cost-advantage of BAP relative to EIM-SV is relatively small for this problem, because Krylov subspace methods are
much more efficient in larger systems. The dimensional size of L96 is still small for Krylov methods. Using subspaces with e.g. 25 dimensions leads to a REGI of 0.85, in exchange an effort of 0.5 is needed, relative to the full computation. 
For the full recovery of EIM-SV with BAP, through the usage of Krylov spaces with dimension 50, the same computational effort is required (even a bit more), which is expectable a priori.
The choice of initial
vectors is irrelevant for the computational costs since these are determined by the size of the subspaces only.
Indeed, results in L96 are less interesting according to computational costs, but in this system one can observe that BAP is functional in principle. 
To investigate the behaviour of BAP in larger systems the SWM is considered.

\subsection{Shallow Water Model}

In SWM the ratio of Krylov subspace sizes can be much smaller compared to the system dimension
(1587). Due to the larger dimension of SWM, we extended the range for testing up to a Krylov subspace size of 1000. 

One should note that the absolute growth rates depend on the chosen norms and intrinsic properties of the used model. Hence, these should not be compared directly. 

In the same way as for L96 Figure \ref{fig:SWM_GR} shows the MEGR for the SWM. The
growth curves of EIM-SV and BAP are qualitatively similar in L96 and SWM. This includes a relatively
fast decay of growth rates, the shape of the curves and the behaviour of single chord vectors or
random vectors.

In SWM EIM-SV produces damped oscillations of MEGR after the period of optimization. With an increasing block size and a larger number of iterations BAP can reproduce the oscillating character of EIM-SV. Once again the EIM can not cover anything after this period, but this behaviour is interesting according intrinsic properties of SWM. 

In fact, in SWM it is obvious that the quality of the BAP approximation is not only determined by the dimension of the Krylov subspace but also depends on the number and choice of initial vectors.

It can be seen in Figures \ref{fig:SWM_GR} and Table \ref{tab:SWM_REGI} that the increase of growth rates is in many cases smaller if random initial vectors were chosen. Using chord vectors enables for many cases a better approximation especially in smaller subspaces.  

Perturbations obtained from a e.g. $40$-dimensional Krylov subspace are already sufficient to almost accurately reproduce even the oscillating character of 
the leading EIM-SV in Figure \ref{fig:SWM_GR}. Although the chord choice for initial vectors shows a curious behaviour in SWM (with total energy ``norm''): Sometimes the REGI measurements shrink although the size of the subspaces increases (Table \ref{tab:SWM_REGI}).

\begin{table}[bt]
\caption{SWM - BAP - REGI with $T=0.2$ (see (\autoref{REGI})) }\label{tab:SWM_REGI}
\centering
\begin{threeparttable}
\begin{tabular}{lrrrrrrrrrrrrr}
\headrow
\multicolumn{7}{r}{\textbf{number of random initial vectors}} & \multicolumn{7}{r}{\textbf{number of initial chord vectors}}\\
\headrow
\textbf{iter.} & \textbf{1} & \textbf{2}  & \textbf{4} & \textbf{5} &  \textbf{10} & \textbf{20} & & \textbf{1} & \textbf{2}  & \textbf{4} & \textbf{5} &  \textbf{10} & \textbf{20} \\
\hline
1 &  0.00 &  0.03 &    0.06 &  0.06 &  0.09 &  0.12 &  &  0.00 &  0.11 &    0.23 &  0.26 &  0.38 &  0.53 \\
2 &  0.09 &  0.09 &    0.11 &  0.12 &  0.15 &  0.18 &  &  0.14 &  0.43 &   0.55 &  0.59 &  0.67 &  0.71 \\
3 &  0.19 &  0.17 &    0.19 &  0.20 &  0.23 &  0.28 &  &  0.21 &  0.49 &   0.73 &  0.75 &  0.77 &  0.75 \\
4 &  0.24 &  0.23 &    0.26 &  0.28 &  0.32 &  0.41 &   &  0.27 &  0.52 &    0.73 &  0.74 &  0.73 &  0.68 \\
5 &  0.28 &  0.29 &   0.33 &  0.35 &  0.43 &  0.55 &  &  0.30 &  0.53 &    0.71 &  0.71 &  0.68 &  0.65 \\
10 &  0.28 &  0.34 &  0.44 &  0.48 &  0.63 &  0.79 &   &  0.36 &  0.50 &    0.69 &  0.70 &  0.75 &  0.79 \\
20 &  0.33 &  0.42 &   0.58 &  0.64 &  0.81 &  0.90 &   &  0.39 &  0.57 &    0.72 &  0.74 &  0.79 &  0.82 \\
50 &  0.52 &  0.68 &   0.80 &  0.83 &  0.89 &  0.93 &   &  0.55 &  0.68 &    0.79 &  0.81 &  0.86 &  0.90 \\
\end{tabular}
\end{threeparttable}
\end{table}

As stated in section \ref{sect:growth}, this is possible due to the construction of the perturbations and the definition of the growth 
scores, although this is an unwanted but noticeable behaviour. Because it does not occur if random initial vectors were chosen, unfavourable properties of the chosen chords in this system may cause this. 
However, the reasons causing this are not clear. 

Nevertheless, chord vectors are an interesting and still promising option for initial vectors here. At least we would like to point out, that the results show, that the choice of initial vectors is relevant for the method.\\

Besides the behaviour of the growth, the spatial structure of an EIM-SV and two approximations thereof are shown in Figure \ref{fig:sp_struct}. 
The EIM-SV (Figure \ref{fig:sp_struct}, first row) has weight concentrates at the inner grid points of the top left corner.

Perturbing the corners is reasonable, because this will cause interfering waves after reflection at the boundaries. Hence, we observed that in SWM EIM-SV perturbations occur at the four corners of the domain. 
This structure is quite well recovered by a BAP (started with chord vectors) obtained from a subspace of dimension 1000 (Figure \ref{fig:sp_struct}, third row). Also the BAP computed in a subspace of size 40 (Figure \ref{fig:sp_struct}, second row) contains a relevant weight at the top left corner, but of course the overall picture is less clear. However, taking into account that the used subspace here covers less than three percent of the whole space, this can be classified as a good result.   
\\

\begin{table}[ht]
 \caption{SWM - BAP - computation time relative to EIM-SV } \label{tab:SWM_time}
\centering
\begin{threeparttable}
\begin{tabular}{lrrrrrr}
\headrow
 \multicolumn{7}{r}{\textbf{number of initial vectors}}\\
\headrow
\textbf{iterations} & \textbf{1} & \textbf{2} & \textbf{4} & \textbf{5} & \textbf{10} & \textbf{20}\\

\hline
  1  & 0.001  & 0.001    & 0.002  & 0.003  & 0.006  & 0.012 \\
  2  & 0.001  & 0.002    & 0.005  & 0.006  & 0.012  & 0.025 \\
 3 & 0.002  & 0.004    & 0.007  & 0.009  & 0.019  & 0.037 \\
  4 & 0.002  & 0.005    & 0.010  & 0.012  & 0.025  & 0.050 \\
  5  & 0.003  & 0.006   & 0.012  & 0.015  & 0.031  & 0.062 \\
  10  & 0.006  & 0.012    & 0.024  & 0.031  & 0.062  & 0.124 \\
 20  & 0.012  & 0.024   & 0.049  & 0.061  & 0.123  & 0.247 \\
  50  & 0.031  & 0.061    & 0.122  & 0.153  & 0.306  & 0.620 \\
\end{tabular}
\end{threeparttable}
\end{table}

\clearpage

\begin{figure}[h]
\centering
  \includegraphics[trim = 0mm 0mm 0mm 0mm, clip,width=15.5cm]{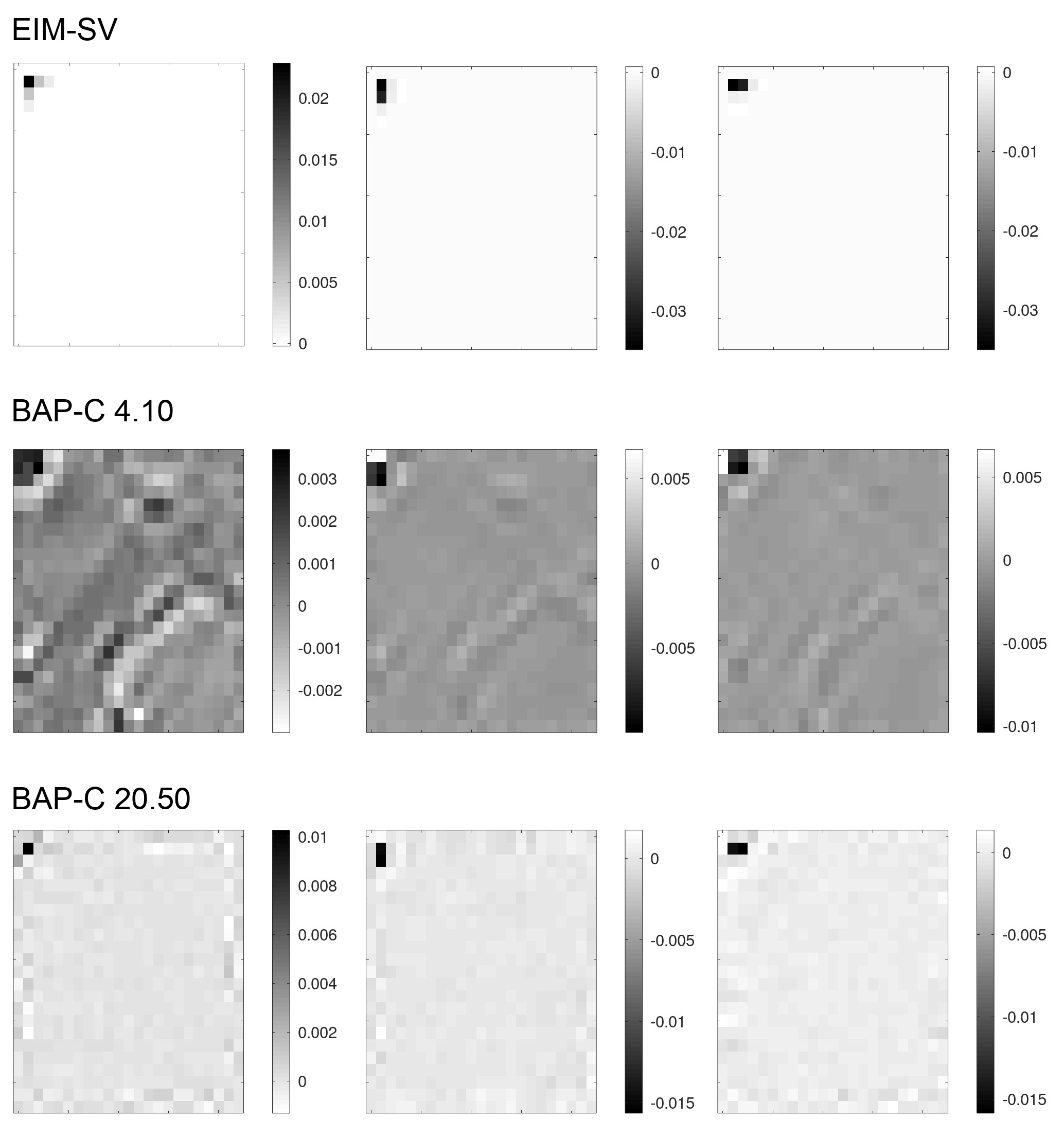}
  \caption[Bildunterschrift]{ Spatial structure of an EIM-SV perturbation and two approximations thereof using BAP (started with chord vectors) in SWM. The squares display the grid points of the domain $\Omega$.  
  Here, 4 (20) initial vectors and 10 (50) iteration loops are used for approximation. The first (second / third) column shows the $\tilde{h}$ ($\tilde{u}$ / $\tilde{v}$) (see. Eq(\ref{eqn:transformed})) part for each perturbation, respectively. 
  }\label{fig:sp_struct} 
\end{figure}

\begin{figure}[h]
\centering
  \includegraphics[trim = 0mm 0mm 0mm 0mm, clip,width=15.5cm]{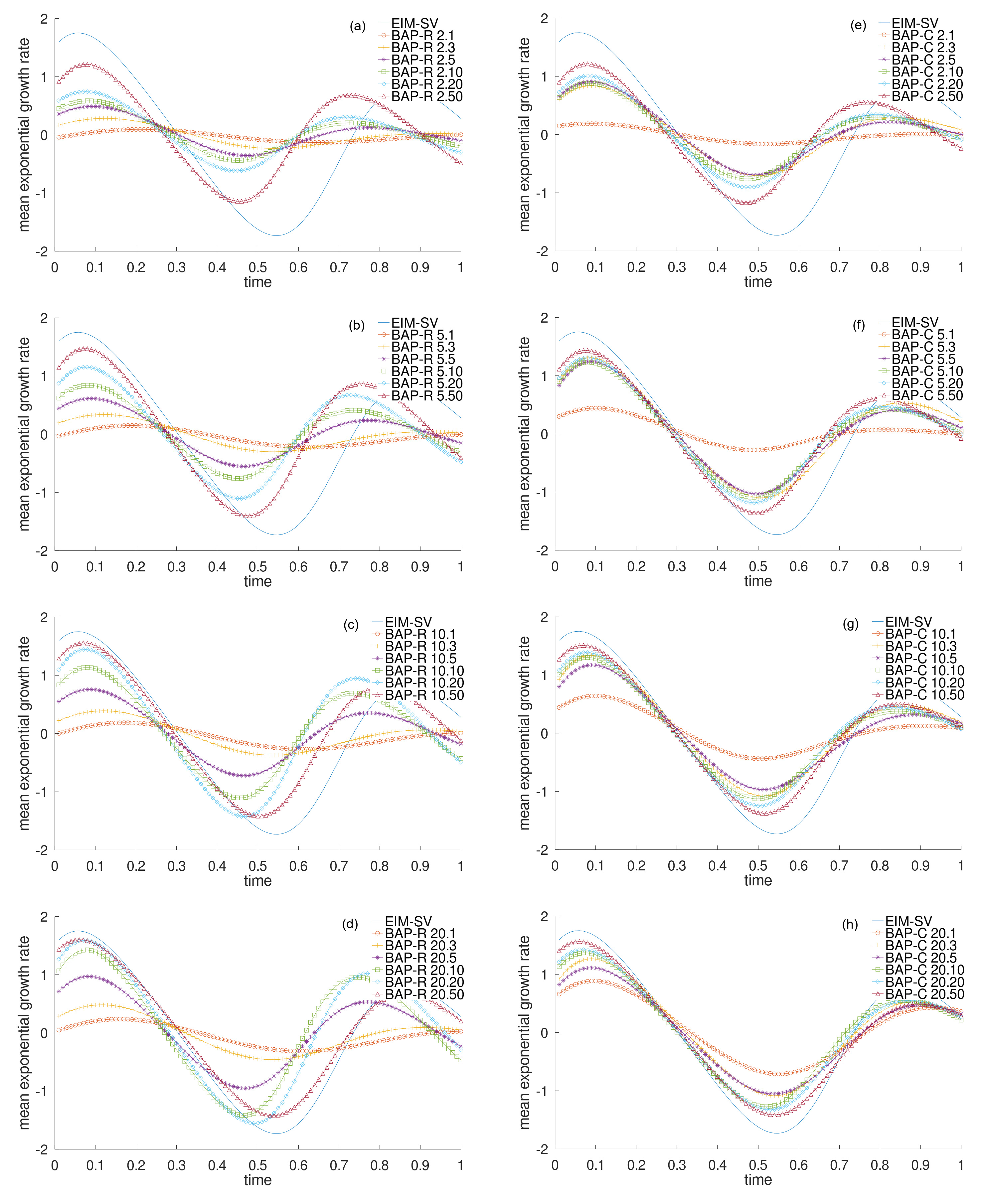}
  \caption[Bildunterschrift]{MEGR for SWM - EIM-SV and \underline{r}andom vector started BAP (BAP-R) (left hand side) or \underline{c}hord vector started BAP (BAP-C) (right hand side). BAP-R/C i.j denotes an approximation of EIM-SV obtained by the usage of i initial vectors and j iteration loops for BAP. Plot (a) BAP-R $i=2$, (b) BAP-R $i=5$, (c) BAP-R $i=10$, (d) BAP-R $i=20$, (e) BAP-C $i=2$, (f) BAP-C $i=5$, (g) BAP-C $i=10$ and (h) BAP-C $i=20$. }\label{fig:SWM_GR}
\end{figure}

\clearpage

In contrast to L96 the version of the SWM used here is large enough to get significant benefits from BAP with respect to the computational costs. Table \ref{tab:SWM_time} shows the low costs of BAP relative to EIM-SV. For the BAP with block size of four and ten iteration loops the computational costs are at only 0.024 relative to the full computation of EIM-SV, while the corresponding REGI reaches 0.75.

We like to point out, that the full computation of EIM-SV is very expensive and can not be compared to the classical SV approximation with a linearised model and the Lanczos algorithm but BAP are several orders of magnitude cheaper. 
We are convinced that BAP is a method of large potential to generate fast growing perturbations in high dimensional dynamical systems, but it is not exactly clear how BAP compares to classical SV.

\section{Summary and Conclusions}

A new parallelisable method for generating fast growing perturbations in complex dynamical systems, especially for estimating uncertainty  using ensemble prediction systems (EPS) in numerical weather forecasting,
has been introduced in this paper. 

It is based on the matrix-free approximation of the singular vectors of an evolved increment matrix (EIM) with a block Arnoldi method. This concept is different from the classical approach of computing singular vector perturbations (SV) for ensemble forecasting, implemented e.g. at ECMWF (\cite{bib:Leutb_Palmer08}), which estimates the leading singular vectors a tangent linear propagator. In contrast to classical SV, this block Arnoldi perturbation (BAP) method requires short forecasts with the full non-linear model only, covers also non-linear effects partly and saves the costs of setting up and maintaining linear and adjoint model versions.

We observed promising results for the Lorenz96 (L96) system with 50 and for a shallow water model (SWM) with 1587 degrees of freedom. BAP were tested with two different kinds of initial vectors, namely random vectors and differences of past states from the reference trajectory (chord vectors). In the more complex SWM the convergence of the block Arnoldi approximation improves especially for smaller subspaces when the algorithm is started by chord vectors instead of random initial vectors.  The results show, that the choice of the initial vectors is a relevant part of the method.

We classify an approximation of strongly growing perturbations, as an important subgoal for the generation of reliable ensemble prediction systems. Further research is needed for the questions of how to exactly create an ensemble prediction system using BAP and how such a system works for NWP especially in direct comparison to classical SV.

We are convinced that the BAP approach has the potential to become a relative cheap and relative easy applicable method for identifying the local expanding subspaces in complex dynamical systems.

\section*{Acknowledgement}

We like to thank Stephan Dahlke and Alexander Sieber at the department Numerics and Optimization of the Phillips Universit\"at Marburg for their useful suggestions and help in preparing this article. 
We also thank the BMVI - Expertennetzwerk (German Federal Ministry of Transport and Digital Infrastructure - Network of Experts) for the financial support.

\clearpage


\bibliographystyle{apalike}
\bibliography{literatur_pert_b}

\clearpage


%


%

\end{document}